\documentclass[11pt]{amsart}
\usepackage{amscd,amssymb,float}

\newtheorem{theorem}{Theorem}
\newtheorem{lemma}[theorem]{Lemma}

\newtheorem{proposition}[theorem]{Proposition}

\renewcommand{\leq}{\leqslant}
\renewcommand{\geq}{\geqslant}

\theoremstyle{definition}

\theoremstyle{definition}

\numberwithin{equation}{section}

\numberwithin{equation}{section} \numberwithin{figure}{section}

\author{}
\address{}
\address{}
\email{}
\title{Remarks   on K\"ahler orbifolds of non-negative Ricci curvature}
\author{Yuguang Zhang}
\address{Xi'an Jiaotong-Liverpool University, Suzhou, China}
\email{Yuguang.Zhang@xjtlu.edu.cn}

\begin{document}
\begin{abstract}
 This note proves  orbifold versions of Kobayashi's theorem.  The main result  asserts that a compact K\"ahler orbifold with non-negative Ricci curvature, along with certain conditions regarding singularities, is simply connected.    
\end{abstract}

\maketitle
\footnotetext[1]{Supported in part by grant NSFC-12531001.}
\section{Introduction}
A theorem due to 
Kobayashi (cf. \cite{Ko}) asserts that a compact K\"ahler manifold with positive Ricci curvature is simply connected. This short note aims to generalise this theorem to K\"ahler orbifolds.  The concept of orbifolds was initially  introduced by Satake under the term `V-manifolds', and was  later  renamed  `orbifolds' by Thurston (cf. \cite{Sa,Th}). 

We briefly recall the definition of orbifolds and refer to \cite{Ba, BoZ, Ka1, KoT, Sa} for detailed definitions and  basic properties  of orbifolds.   A complex $n$-orbifold $X$ is a complex analytic space equipped with  a complex orbifold structure (See \cite{Ka2, Ba}). More precisely,  for any point $x\in X$, there exists a neighbourhood $U_x$  that is the quotient of a finite subgroup $G_x$ of $U(n)$ acting linearly   on an open neighbourhood $\tilde{U}_x$ of $0$ in $\mathbb{C}^{n}$, i.e., $\tilde{U}_x/G_x = U_x$. We  denote  the quotient map as  $q_x:\tilde{U}_x \rightarrow U_x$,  which satisfies $q_x(0)=x$, and call $U_x$ an  orbifold chart.  Furthermore, if $U_y \subseteq U_x$, then there is a holomorphic open   embedding $\iota: \tilde{U}_y \hookrightarrow \tilde{U}_x$ and an injective  homomorphism $\kappa: G_y  \rightarrow G_x$ such that $q_x \circ \iota =q_y $ and $\iota (t \cdot y')=\kappa(t)\iota ( y') $ for any $t\in G_y$ and $y'\in \tilde{U}_y$. 
 $X$ admits a regular/singular decomposition $X=X_R\coprod X_S$, where the regular part   $X_R$ is a complex  manifold of dimension $n$. The singular set $X_S$  consists  of those points with  non-trivial orbifold groups, i.e.,  $ x\in X_S$ if and only if $G_x\neq \{1\}$.   
   
   An example of complex orbifolds  is the quotient space $X=\mathbb{C}P^1/\mathbb{Z}_2$ where $\mathbb{Z}_2$ acts on $\mathbb{C}P^1=\mathbb{C} \cup \{\infty\}$ through the map  $z \mapsto -z$. Note that $X$ is  an orbifold  with $X_S$ consisting of 2 points,  even though  $X$ is homeomorphic to $\mathbb{C}P^1$ as topological spaces.  
  A complex projective surface $X$ with only finite ordinary double points as  singularities is a complex orbifold of dimension $2$, which is not homeomorphic to any manifold.

  A K\"ahler  metric $g$ (respectively  $(p,q)$-form $\beta$) on a complex  orbifold $X$ is a smooth K\"ahler   metric $g$ (resp. $(p,q)$-form $\beta$) on the regular part $X_R$, and on any  $q_x^{-1}(U_x \cap X_R)$, the pull-back  $\tilde{g} =q_x^* g$ (resp.  $\tilde{\beta} =q_x^* \beta$) extends to a smooth K\"ahler   metric  (resp. $(p,q)$-form) on $\tilde{U}_x\subseteq \mathbb{C}^{n}$.  We refer to  $(X, g)$ as  a K\"ahler orbifold.    
 
 Many classical theorems for manifolds have been shown to  hold for  orbifolds (\cite{Ba, Ba2, Bor, BoZ, Ka1, Ka2, Sa}), for example, the de Rham theorem, the Dolbeault theorem, and the Hirzebruch-Riemann-Roch formula, etc.   We aim to generalise Kobayashi's theorem to orbifolds in a manner that takes into account the effects of orbifold singularities on the results.  
 
 Note that $G_x$ acts on the $(1,0)$-cotangent space $T^{*(1,0)}_0 \tilde{U}_x$ at  $0\in \tilde{U}_x$, and thus also on  $\wedge^pT^{*(1,0)}_0 \tilde{U}_x$, for all $1\leq p \leq n$. Define  \begin{equation}\label{eq1}
   F_x^{p,0} =\{w\in \wedge^pT^{*(1,0)}_0 \tilde{U}_x| t \cdot w=w, \ {\rm for \ all} \ t\in G_x\},
  \end{equation} i.e., 
 the fixed point set of $G_x$-action on   $\wedge^pT^{*(1,0)}_0 \tilde{U}_x$.  If $x$ is a regular  point, i.e., $x\in X_R$, then $\dim_{\mathbb{C}} F_x^{1,0}=n $  and $\dim_{\mathbb{C}} F_x^{n,0}=1 $.  $x\in X_S$ if and only if $\dim_{\mathbb{C}} F_x^{1,0}<n $. If $x$ is an ADE surface  singularity, i.e., $ \mathbb{C}^2/G_0$ for a finite subgroup $ G_0$ of $SU(2)$,   then $\dim_{\mathbb{C}} F_x^{1,0}=0$ and $\dim_{\mathbb{C}} F_x^{2,0}=1$.
 
 The main result is the following theorem.

   \begin{theorem}\label{main-th}  Let $(X,g)$ be a  compact  K\"ahler  $n$-orbifold with non-negative Ricci curvature, i.e., ${\rm Ric}(g)\geq 0$.  If  \begin{equation}\label{eq2}\sum_{p=1}^{n}\inf_{x\in X}\dim_{\mathbb{C}} F_x^{p,0}=0,   \end{equation}  then  $X$ is simply connected, i.e., the fundamental group $\pi_1(X)$ is trivial.   
\end{theorem}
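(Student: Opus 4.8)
The plan is to follow Kobayashi's strategy, replacing positivity of the Ricci curvature by the vanishing condition \eqref{eq2}, while keeping careful track of the distinction between topological and orbifold coverings.

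First I would unpack the hypothesis. Since each $\dim_{\mathbb{C}}F_x^{p,0}$ is a non-negative integer, \eqref{eq2} is equivalent to: for every $p\in\{1,\dots,n\}$ there is a point $x_p\in X$ with $F_{x_p}^{p,0}=0$. The analytic input is the Bochner technique, which on a compact K\"ahler manifold with $\Ric(g)\geq 0$ forces every holomorphic $p$-form to be parallel (this is exactly Kobayashi's tool, sharpened from ``$=0$'' to ``parallel'' in the borderline case). I would apply this on the regular part $X_R$ and check that a holomorphic orbifold $p$-form $\omega$ is parallel as an orbifold form: parallelism passes to each $\tilde U_x$ because $q_x$ is a local isometry off the singular set and $q_x^*\omega$ extends smoothly. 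Evaluating at $x_p$, the value of $q_{x_p}^*\omega$ at $0$ is $G_{x_p}$-invariant, hence lies in $F_{x_p}^{p,0}=0$; as a parallel form vanishing at a point it is identically zero. Thus $h^{p,0}(X)=0$ for all $1\leq p\leq n$.

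By the orbifold Dolbeault and Hodge theories quoted in the introduction, $h^{0,q}(X)=h^{q,0}(X)=0$ for $q\geq 1$, while $h^{0,0}(X)=1$ since $X$ is compact and connected. Hence the holomorphic Euler characteristic is
\[
\chi(\mathcal{O}_X)=\sum_{q=0}^{n}(-1)^q h^{0,q}(X)=h^{0,0}(X)=1.
\]
Next comes the covering argument, where the genuinely orbifold subtlety lies. Let $\pi:\hat X\to X$ be a connected finite \emph{topological} covering of the underlying space, of degree $d$. Being a local homeomorphism, $\pi$ is a local biholomorphism of the complex spaces, so each point over $x$ inherits an orbifold chart with the \emph{same} local group $G_x$; thus $\hat X$ is a compact K\"ahler orbifold with $\Ric(\pi^*g)\geq 0$ satisfying \eqref{eq2} (identical local groups). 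The previous steps give $\chi(\mathcal{O}_{\hat X})=1$. On the other hand $\pi$ is unramified, so the orbifold Hirzebruch--Riemann--Roch formula yields the multiplicativity $\chi(\mathcal{O}_{\hat X})=d\,\chi(\mathcal{O}_X)=d$. Therefore $d=1$, so $X$ admits no nontrivial connected finite covering; equivalently $\pi_1(X)$ has no proper subgroup of finite index. It is essential that $\pi$ be topological, not orbifold: an orbifold covering such as $\mathbb{C}P^1\to\mathbb{C}P^1/\mathbb{Z}_2$ shrinks the local groups and is ramified on underlying spaces, so it preserves neither \eqref{eq2} nor the multiplicativity of $\chi(\mathcal{O})$.

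Finally, $\pi_1(X)$ is finitely generated, $X$ being a compact orbifold, and since $(X,g)$ has $\Ric(g)\geq 0$ it is virtually nilpotent (the orbifold form of the Milnor--Gromov--Cheeger--Gromoll results, via Bishop--Gromov volume comparison on orbifolds), hence residually finite. A residually finite group whose only finite-index subgroup is itself must be trivial, so $\pi_1(X)=\{1\}$. I expect the two orbifold-specific points to be the main obstacles: verifying that a topological covering inherits the orbifold structure with unchanged local groups, so that both \eqref{eq2} and the multiplicativity of $\chi(\mathcal{O})$ persist on $\hat X$, and establishing that $\pi_1(X)$ is residually finite in the orbifold setting; by comparison, extending the Bochner vanishing and parallelism across $X_S$ is a more routine verification.
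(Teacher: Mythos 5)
Your proposal is correct, and its analytic core coincides with the paper's: the Bochner--Weitzenb\"ock argument forces holomorphic $p$-forms to be parallel, evaluation at points with $F_{x_p}^{p,0}=0$ kills $h^{p,0}(X)$ for $1\leq p\leq n$, hence $\chi(X,\mathcal{O}_X)=1$; and a finite topological covering inherits the orbifold structure with unchanged local groups, so Kawasaki's Riemann--Roch formula gives multiplicativity of the holomorphic Euler characteristic. Where you genuinely diverge is the group-theoretic endgame. The paper first proves that $\pi_1(X)$ is \emph{finite} (Lemma \ref{le}): it equips the topological universal cover $\bar X$ with the induced orbifold metric, invokes the Borzellino--Zhu splitting theorem $\bar X=Y\times\mathbb{R}^l$ from \cite{BoZ}, and uses the $p=1$ part of (\ref{eq2}) to force $l=0$ (a Euclidean factor would produce a nonzero $G_x$-invariant $(1,0)$-covector at every point), so that $\bar X$ is compact; it then applies the multiplicativity $\chi(\bar X,\mathcal{O}_{\bar X})=|\pi_1(X)|\,\chi(X,\mathcal{O}_X)$ just once, to the universal cover. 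You instead apply multiplicativity to \emph{every} connected finite cover to conclude that $\pi_1(X)$ has no proper finite-index subgroup, and then upgrade this to triviality via residual finiteness, obtained from virtual nilpotence (Milnor's growth argument on $\bar X$ together with orbifold Bishop--Gromov comparison as in \cite{Bor}, then Gromov's polynomial-growth theorem). Both routes are sound: the group-theoretic chain (finitely generated, virtually nilpotent $\Rightarrow$ residually finite; residually finite with no proper finite-index subgroup $\Rightarrow$ trivial) is correct, and Milnor's argument does transfer, since the deck group acts isometrically with positive minimal displacement and \cite{Bor} supplies the volume comparison. The trade-off: the paper's argument stays entirely within the cited orbifold literature, with the splitting theorem as its only comparison-geometry input, whereas yours avoids the splitting theorem but leans on Gromov's far deeper theorem and on an orbifold transfer you assert rather than verify; note also that replacing Gromov by Cheeger--Gromoll virtual abelianness would bring you back to the splitting theorem, i.e., essentially to the paper's Lemma \ref{le}.
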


The hypothesis (\ref{eq2}), along with the assumption of non-negative Ricci curvature, plays a similar role to that of  positive Ricci curvature in Kobayashi's theorem.

 Now we apply Theorem \ref{main-th}  to an example. Let $T^2_{\mathbb{C}}=\mathbb{C}^2/(\mathbb{Z}^2+\sqrt{-1}\mathbb{Z}^2)$ be the complex 2-torus. If $z_1$ and $z_2$ are the angle coordinates on $T^2_{\mathbb{C}}$ induced from   coordinates on $\mathbb{C}^2$, then $\mathbb{Z}^2_2$ acts on $T^2_{\mathbb{C}} $ by $\gamma_1 \cdot (z_1, z_2)=(-z_1, -z_2)$ and $\gamma_2 \cdot (z_1, z_2)=(z_2, z_1)$, where $\gamma_1$ and $\gamma_2$ are generators of  $\mathbb{Z}^2_2$.  The quotient space $X=T^2_{\mathbb{C}}/\mathbb{Z}^2_2$ is a complex orbifold that admits a flat orbifold K\"ahler metric $g$ induced by the Euclidean  metric on $\mathbb{C}^2$.   Note that $(0,0)\in T^2_{\mathbb{C}}$ is fixed by the $\mathbb{Z}^2_2$-action. 
 We consider the orbifold  point $x_0$ in $X$ which   is the image of  $(0,0)\in T^2_{\mathbb{C}}$ under the quotient map.  The induced $\mathbb{Z}^2_2$-actions on $T^{*(1,0)}_{(0,0)} T^2_{\mathbb{C}}$ and $ \wedge^2 T^{*(1,0)}_{(0,0)} T^2_{\mathbb{C}}$ are  given by $\gamma_1 \cdot dz_i=-dz_i$, $i=1,2$, and $\gamma_2 \cdot dz_1\wedge dz_2=-dz_1\wedge dz_2$. Thus we have  $$\dim_{\mathbb{C}} F_{x_0}^{1,0}+\dim_{\mathbb{C}} F_{x_0}^{2,0}=0, $$ and $X$ is simply connected by Theorem \ref{main-th}.   We remark that although   $X$ is simply connected, the orbifold  fundamental group $\pi_1^{orb}(X)$ of  $X$ is not trivial, and the universal orbifold covering of $X$ is $\mathbb{C}^2$ (See \cite{Bor, BoZ} for the definitions of orbifold  fundamental group and of  universal orbifold covering). 

We also have a theorem that includes a broader range of orbifold types than those permitted in Theorem \ref{main-th}, such as the ADE surface singularities.

 \begin{theorem}\label{th-new}  Let $(X,g)$ be a  compact  K\"ahler  $n$-orbifold with non-negative Ricci curvature, i.e., ${\rm Ric}(g)\geq 0$.  If $n=2m$ and \begin{equation}\label{eq2+}\sum_{p=1}^{n-1}\inf_{x\in X}\dim_{\mathbb{C}} F_x^{p,0}=0,   \end{equation}  then  $X$ is either  simply connected  or the fundamental group $\pi_1(X)\cong \mathbb{Z}_2$.  Furthermore, 
  \begin{itemize}
\item[i)] 
 if $\pi_1(X)\cong \mathbb{Z}_2$, then    ${\rm Ric}(g)\equiv 0$, i.e., $(X,g)$ is a Calabi-Yau orbifold, 
 \item[ii)]  if $X$ admits a  non-vanishing  holomorphic $2m$-form $\Omega$, then $X$ is a   simply connected Calabi-Yau orbifold. 
  \end{itemize}
\end{theorem}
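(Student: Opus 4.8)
The plan is to run the same orbifold Bochner/vanishing mechanism that underlies Theorem \ref{main-th}, but now both on $X$ and on its universal cover, and then to compare holomorphic Euler characteristics. First I would extract the two consequences of $\Ric(g)\geq 0$ together with (\ref{eq2+}). By the orbifold Bochner technique every holomorphic $p$-form on $X$ is parallel; since (\ref{eq2+}) supplies, for each $1\leq p\leq n-1$, a point $x_p$ with $\dim_{\CC}F_{x_p}^{p,0}=0$, the $G_{x_p}$-invariance of the pulled-back form forces a parallel $p$-form to vanish at $x_p$ and hence identically, so $h^{p,0}(X)=0$ for $1\leq p\leq n-1$. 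A holomorphic $n$-form is likewise parallel and, on the connected $X$, two such forms are proportional, so $h^{n,0}(X)\in\{0,1\}$. Because $n=2m$ is even, Hodge theory for compact K\"ahler orbifolds gives $\chi(\mathcal O_X)=\sum_p(-1)^p h^{p,0}(X)=1+h^{n,0}(X)\in\{1,2\}$; in particular $h^{1,0}(X)=0$, whence $b_1(X)=0$.

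Next I would pass to the universal cover. From $b_1(X)=0$ and the orbifold Cheeger--Gromoll splitting theorem for $\Ric\geq 0$ the flat factor of the universal cover is trivial, so $\pi_1(X)$ is finite; let $\pi\colon\tilde X\to X$ be the universal covering and $d=|\pi_1(X)|$. The key point is that $\pi$ is an \emph{orbifold} covering preserving all local isotropy groups: each chart $U_x=\tilde U_x/G_x$ has simply connected underlying space, since every element of $G_x\subset U(n)$ fixes the origin and hence, by Armstrong's theorem, $\CC^n/G_x$ has trivial fundamental group. Thus $\pi$ is trivial over every $U_x$, and each $\tilde x\in\pi^{-1}(x)$ carries $G_{\tilde x}\cong G_x$ acting identically on $\CC^n$. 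Consequently $\tilde X$ inherits $\Ric\geq 0$ and satisfies (\ref{eq2+}) verbatim, so the previous paragraph applies to $\tilde X$ and yields $h^{p,0}(\tilde X)=0$ for $1\leq p\leq n-1$ together with $\chi(\mathcal O_{\tilde X})=1+h^{n,0}(\tilde X)\in\{1,2\}$.

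Finally I would compare the two Euler characteristics. Since $\pi$ is an unramified orbifold covering of degree $d$ and the deck group $\pi_1(X)$ acts freely on $\tilde X$, the orbifold holomorphic Lefschetz fixed point formula (equivalently Kawasaki--Riemann--Roch) gives the multiplicativity $\chi(\mathcal O_{\tilde X})=d\cdot\chi(\mathcal O_X)$. Combined with $\chi(\mathcal O_X)\geq 1$ and $\chi(\mathcal O_{\tilde X})\leq 2$ this forces $d\leq 2$, so $\pi_1(X)$ is trivial or $\cong\mathbb{Z}_2$ (alternatively, when $\inf_{x}\dim_{\CC}F_x^{n,0}=0$ the full hypothesis (\ref{eq2}) holds and Theorem \ref{main-th} already gives simple connectivity). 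If $d=2$, then $2\chi(\mathcal O_X)=\chi(\mathcal O_{\tilde X})\leq 2$ forces $\chi(\mathcal O_X)=1$ and $\chi(\mathcal O_{\tilde X})=2$, i.e. $h^{n,0}(\tilde X)=1$; the resulting parallel, nowhere-vanishing holomorphic $n$-form makes $\tilde X$ Ricci-flat, and since $g$ is locally isometric to the metric on $\tilde X$ we conclude $\Ric(g)\equiv 0$, so $(X,g)$ is Calabi--Yau.

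I expect the main obstacle to be the orbifold bookkeeping in the middle step, rather than the Bochner estimates or the final numerology. Specifically, the delicate points are verifying that the topological universal cover is a genuine orbifold covering with preserved isotropy groups, so that hypothesis (\ref{eq2+}) transfers to $\tilde X$ and $\pi$ is \'etale in the orbifold sense, and justifying the multiplicativity $\chi(\mathcal O_{\tilde X})=d\cdot\chi(\mathcal O_X)$ through an orbifold (Kawasaki) Riemann--Roch or holomorphic Lefschetz argument; once these are in place the remaining inequalities are immediate.
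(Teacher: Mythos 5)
Your overall architecture --- Bochner making holomorphic forms parallel, the isotropy hypothesis killing $h^{p,0}$ for $1\leq p\leq n-1$, transfer of the orbifold structure and of (\ref{eq2+}) to the universal cover, Kawasaki multiplicativity $\chi(\mathcal{O}_{\bar X})=|\pi_1(X)|\,\chi(\mathcal{O}_X)$ forcing $|\pi_1(X)|\leq 2$, and holonomy reduction to $SU(2m)$ giving Ricci-flatness in the $\ZZ_2$ case --- is exactly the paper's. But there is one genuine gap: your finiteness step. You claim that $b_1(X)=0$ together with the orbifold splitting theorem forces the Euclidean factor of the universal cover to be trivial, hence $\pi_1(X)$ finite. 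That implication is false even for K\"ahler \emph{manifolds} with ${\rm Ric}\geq 0$: take $X=(E\times S)/\ZZ_2$, where $E$ is an elliptic curve with the involution $z\mapsto -z$, $S$ is a K3 surface with a free Enriques involution (isometric for a suitable Ricci-flat K\"ahler metric, by uniqueness of the Calabi--Yau metric in an invariant class), and $\ZZ_2$ acts diagonally --- freely, since it is free on $S$. Then ${\rm Ric}\geq 0$ and $b_1(X)=\dim_{\RR} H^1(E\times S)^{\ZZ_2}=0$, yet $\pi_1(X)$ is infinite and the universal cover $\CC\times S$ has a nontrivial flat factor. This example does not satisfy (\ref{eq2+}), so the theorem is safe, but your proof of the finiteness step uses only $b_1(X)=0$ at that point, not (\ref{eq2+}), so as written it would ``prove'' finiteness for this example too.

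The fix is the paper's Lemma \ref{le}, whose hypothesis you do have because $p=1$ lies in the range $1\leq p\leq n-1$ of (\ref{eq2+}): there exists $x$ with $\dim_{\CC}F_x^{1,0}=0$. Equip the topological universal cover $\bar X$ with the induced orbifold structure (your Armstrong-theorem observation that each chart $\tilde U_x/G_x$ has simply connected underlying space is the right justification, and it also gives $G_{\bar x}\cong G_x$), and apply the Borzellino--Zhu splitting theorem \cite{BoZ} to write $\bar X=Y\times \RR^l$ with $Y$ compact; as the paper notes, one must rerun their compactness argument with the topological rather than the orbifold fundamental group. If $l>0$, then the chart $\bar U_x^\nu=\tilde U_x/G_x$ over the special point $x$ contains the flat directions, so $G_x\subseteq U(n)$ fixes a nonzero real covector; splitting $T^*_0\tilde U_x\otimes\CC$ into $(1,0)$ and $(0,1)$ parts produces a nonzero element of $F_x^{1,0}$, a contradiction. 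Hence $l=0$, $\bar X=Y$ is compact, $\pi_1(X)$ is finite, and the remainder of your argument (which matches the paper's) goes through as written.
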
 

An implication   of Theorem \ref{th-new} is that  the Kummer K3 orbifold $T^2_{\mathbb{C}}/\mathbb{Z}_2$, defined as the quotient of the complex two-dimensional torus $T^2_{\mathbb{C}}$ by the involution $(z_1, z_2) \mapsto (-z_1, -z_2)$, is simply connected.
  This fact has been proved in  \cite{Sp}.
   
   {\bf Acknowledgments. } The author thanks the referee for helpful   suggestions that have improved the present work.   

\section{Proofs}

 This section  proves 
 Theorem \ref{main-th} and  Theorem \ref{th-new}. We
 follow the  proof of Kobayashi's theorem, and  adapt  the argument  to the case of orbifolds.   
Let $X$ be a compact complex orbifold of dimension $n$,  $g$ be a K\"ahler metric on $X$, and  $\nabla$ be  the Levi-Civita connection of $g$.

 Denote  $A^{p,q}(X)$ (resp. $A^k(X)$) as the space of smooth $(p,q)$-forms (resp. $k$-forms) on $X$. As in the smooth case, the exterior differentiation $d: A^k(X) \rightarrow A^{k+1}(X) $,  the Cauchy-Riemann operator  $\bar{\partial}: A^{p,q}(X) \rightarrow A^{p,q+1}(X) $, and   $\partial: A^{p,q}(X) \rightarrow A^{p+1,q}(X) $ are well-defined, which satisfy  $d=\partial+ \bar{\partial}$ and $ d^2=\partial^2= \bar{\partial}^2=0$.   The De Rham cohomology $H^k(X)$ and the Dolbeault cohomology $H^{p,q}(X)$ are defined in the same way as in  the case of smooth manifolds (cf. \cite{Ba, Ba2, Ka2}).   Note that  $ H^{p,0}(X)$ consists of   holomorphic $p$-forms, i.e., $\beta \in A^{p,0}(X)$, $\bar{\partial} \beta=0$. 
  
  We also have the Laplace  operators $\Delta_d=(d+d^*)^2$, $\Delta_{\bar{\partial}}=(\bar{\partial}+\bar{\partial}^*)^2$, and $\Delta_{\partial}=(\partial+\partial^*)^2$,  where $d^*$, $\bar{\partial}^*$, and $\partial^*$ are the adjoint operators. 
 The same local calculation as in the manifold case shows $\Delta_d=2 \Delta_{\partial}=2 \Delta_{\bar{\partial}}$ (cf. Theorem 8.6 in \cite{Mo}). By the Hodge-Kodaira   decomposition theorem of  \cite{Ba2},  $H^{p,q}(X)$ (resp. $H^{k}(X)$) is isomorphic to the space $\mathcal{H}^{p,q}(X)$ (resp. $\mathcal{H}^{k}(X)$) of harmonic $(p,q)$-forms, i.e., $$H^{p,q}(X) \cong \mathcal{H}^{p,q}(X)=\{\beta \in A^{p,q}(X)| \Delta_{\bar{\partial}} \beta =0\}.  $$ Furthermore, $\mathcal{H}^{p,q}(X)= \overline{\mathcal{H}^{q, p}(X)} $ and $\dim_{\mathbb{C}} \mathcal{H}^{0,0}(X)=1 $.  The holomorphic Euler characteristic is defined as  $$\chi(X, \mathcal{O}_X)=\sum_{p=0}^{n}(-1)^p\dim_{\mathbb{C}}H^{0, p}(X)=\sum_{p=0}^{n}(-1)^p\dim_{\mathbb{C}}H^{ p, 0}(X).$$ 

 The following proposition  might have some independent interests.  
 
 \begin{proposition}\label{prop1} If  $(X,g)$ is  a  compact  K\"ahler  $n$-orbifold with non-negative Ricci curvature, i.e., ${\rm Ric}(g)\geq 0$, then for any $p\geq 1$,  $$\dim_{\mathbb{C}} H^{p, 0}(X) \leq \inf_{x\in X}\dim_{\mathbb{C}} F_x^{p,0}.    $$ 
\end{proposition}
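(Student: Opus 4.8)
The plan is to run the classical Bochner technique on the orbifold and then exploit the local orbifold group action to land in the invariant subspace $F_x^{p,0}$. Since the excerpt identifies $H^{p,0}(X)$ with the harmonic $(p,0)$-forms, and for bidegree $(p,0)$ the equation $\bar{\partial}^*\beta=0$ holds automatically (it would be a $(p,-1)$-form), harmonicity of $\beta$ is equivalent to $\bar{\partial}\beta=0$, i.e. to $\beta$ being a holomorphic $p$-form. The key reduction I would establish is that, under ${\rm Ric}(g)\geq 0$, every such $\beta$ is parallel, $\nabla\beta\equiv 0$, for the Levi-Civita connection $\nabla$.

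To prove parallelism I would view $\beta$ as a holomorphic section of the Hermitian holomorphic bundle $\Omega^p_X=\wedge^p T^{*(1,0)}X$ with its Chern connection $\nabla$ induced by $g$. Contracting $\sqrt{-1}\partial\bar{\partial}|\beta|^2$ with the K\"ahler form $\omega$ and integrating over $X$ yields the Bochner identity
\[
\int_X |\nabla^{1,0}\beta|^2 = \int_X \langle \sqrt{-1}\Lambda_\omega F_{\Omega^p}\,\beta, \beta\rangle,
\]
where $\sqrt{-1}\Lambda_\omega F_{\Omega^p}$ is the curvature endomorphism of $\Omega^p_X$. On a K\"ahler manifold this endomorphism is obtained by extending $-{\rm Ric}$, regarded as an endomorphism of $T^{*(1,0)}X$, as a derivation on $\wedge^p T^{*(1,0)}X$, so ${\rm Ric}(g)\geq 0$ makes it negative semidefinite; the right-hand side is then $\leq 0$ while the left-hand side is $\geq 0$, forcing $\nabla^{1,0}\beta\equiv 0$. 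Together with $\nabla^{0,1}\beta=\bar{\partial}\beta=0$ this gives $\nabla\beta\equiv 0$. This computation is a pointwise identity on $X_R$, equivalently on each uniformizing chart after pulling back by $q_x$; the integration by parts is legitimate because the pulled-back forms are genuinely smooth and $G_x$-invariant on the chart, $X_S$ has real codimension at least two, and the Hodge--Kodaira theory already invoked in the excerpt guarantees the divergence theorem on $X$ with no contribution from $X_S$.

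With parallelism in hand I would carry out the orbifold-specific step. Fix $x\in X$ and a chart $q_x:\tilde{U}_x\to U_x$ with $q_x(0)=x$. For a parallel holomorphic $p$-form $\beta$, the pullback $\tilde{\beta}=q_x^*\beta$ is a smooth, parallel $(p,0)$-form on $\tilde{U}_x$ that is $G_x$-invariant, so its value at the origin $\tilde{\beta}(0)\in\wedge^p T^{*(1,0)}_0\tilde{U}_x$ is fixed by $G_x$, i.e. $\tilde{\beta}(0)\in F_x^{p,0}$. This defines a linear evaluation map $H^{p,0}(X)\to F_x^{p,0}$, $\beta\mapsto\tilde{\beta}(0)$. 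Because $X$ is connected (the excerpt records $\dim_{\mathbb{C}}\mathcal{H}^{0,0}(X)=1$) and $\beta$ is parallel, the pointwise norm $|\beta|$ is constant on $X$; hence $\tilde{\beta}(0)=0$ forces $\beta\equiv 0$, so the evaluation map is injective. Therefore $\dim_{\mathbb{C}}H^{p,0}(X)\leq\dim_{\mathbb{C}}F_x^{p,0}$ for every $x\in X$, and taking the infimum over $x$ gives the asserted bound.

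I expect the principal obstacle to be the rigorous justification of the Bochner argument in the orbifold setting, namely confirming that integrating the Weitzenb\"ock identity over $X$ produces no boundary terms along the singular locus $X_S$ and that the curvature endomorphism retains the same sign as in the smooth K\"ahler case. Both points should reduce to working on the uniformizing charts together with the orbifold Hodge theory already available in the excerpt, so the genuinely new ingredient is the injection of parallel holomorphic forms into the invariant subspace $F_x^{p,0}$, which is exactly what upgrades the classical vanishing into the stated sharp dimension bound.
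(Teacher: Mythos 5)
Your proposal is correct and follows essentially the same route as the paper: the Bochner technique shows every holomorphic $p$-form is parallel, and then evaluation at the origin of a uniformizing chart gives an injective linear map $H^{p,0}(X)\to F_x^{p,0}$ (your constant-norm injectivity argument and the paper's orthonormal-basis argument are the same idea). The only cosmetic difference is that you integrate the $\sqrt{-1}\Lambda_\omega\partial\bar{\partial}|\beta|^2$ identity for the Chern connection on $\Omega^p_X$, while the paper integrates the Weitzenb\"ock formula $2\Delta_{\bar{\partial}}\beta=\nabla^*\nabla\beta+{\rm Ric}^p(\beta)$ directly against $\beta$ --- standard equivalent packagings of the same computation.
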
 

 \begin{proof} Note that 
 the Bochner technique applies  to  orbifolds (cf. \cite{CaT, Bor}).  
   The  Weitzenb\"ock  formula (cf.  \cite{Bo} and  Proposition 14.3 in \cite{Mo}) says that $$2\bar{\partial}^*\bar{\partial}\beta =2 \Delta_{\bar{\partial}} \beta =\nabla^*\nabla \beta + {\rm Ric}^p (\beta).$$ Here    ${\rm Ric}^p: A^{p,0} (X)\rightarrow A^{p,0} (X)$ is a zero order  linear  operator such that $g({\rm Ric}^p(\beta), \beta)\geq 0$, for any $\beta \in A^{p,0}(X)$,  when the Ricci curvature  of $g$ is non-negative, i.e., ${\rm Ric}(g)\geq 0$. 
  We recall that   for a function $f$ on $X$, the integration is defined as $$ \int_X f dv_g= \sum_{U_x \in \mathcal{U} } \frac{1}{|G_x|}\int_{\tilde{U}_x} (\eta_{U_x}  f)\circ q_x dv_{\tilde{g}},$$ (cf. \cite{Ba, Ka2})  where $|G_x|$ is the cardinality of $G_x$. Here 
 $ \mathcal{U}$ is a countable  collection of orbifold charts   $U_x$ that covers of $X$, and $\{\eta_{U_x}\}$ is a partition of unity  with respect to $ \mathcal{U}$, i.e.,  $\sum\limits_{U_x \in \mathcal{U} } \eta_{U_x} \equiv 1$.  
 Now, for any $\beta \in H^{p,0}(X)$, we  integrate  the inner product of $\beta$ with the Weitzenb\"ock  formula, and  obtain $$ 0=\int_X( |\nabla \beta|^2_g+g({\rm Ric}^p(\beta), \beta))dv_g\geq \int_X |\nabla \beta|^2_gdv_g\geq 0.$$ Thus $\nabla \beta \equiv 0$, i.e., all holomorphic $p$-forms $\beta$ are parallel.  

   If $\beta$ is a parallel holomorphic $p$-form, then for any $x\in X$, $q_x^*\beta$ extends to a parallel $G_x$-invariant  $p$-form $\tilde{\beta}$ on $\tilde{U}_x$.   This implies $\tilde{\beta}(0) \in  F_x^{p,0}$. By considering an orthonormal basis of $H^{p,0}(X)$, we obtain the  result.   
\end{proof}

Similar theorems have been obtained for Riemannian orbifolds (Theorem 2.2 of \cite{CaT} and Proposition 9.0.1 of \cite{Z}).  More precisely, the Bochner technique  can be deployed to show that the first Betti number  of a compact Riemannian orbifold $M$ with non-negative Ricci curvature is zero, i.e., $b_1(M)=0$,  if  $$\inf\limits_{x\in M}\dim_{\mathbb{R}} F_x^{1}=0,$$ where $F_x^{1}$ denotes the set of fixed points of the $G_x$-action on $T^*_0\tilde{U}_x$.  Furthermore,  the splitting theorem for orbifolds in \cite{BoZ} shows that  the fundamental group $\pi_1(M)$ is finite  in this  case. This result generalises Myers' theorem for Riemannian manifolds.  We only state and prove this fact  for K\"ahler orbifolds, as it will be used in the proof of the main theorems.

\begin{lemma}\label{le} Let   $(X,g)$ be  a  compact  K\"ahler  $n$-orbifold with  ${\rm Ric}(g)\geq 0$. If $$\inf_{x\in X}\dim_{\mathbb{C}} F_x^{1,0}=0,$$ then the fundamental group $\pi_1(X)$ is finite.  
\end{lemma}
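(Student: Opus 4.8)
The plan is to mimic the proof of the classical Bonnet–Myers/Kobayashi finiteness statement, passing through the universal cover and using Proposition~\ref{prop1} together with the hypothesis $\inf_{x\in X}\dim_{\mathbb C}F_x^{1,0}=0$. First I would observe that Proposition~\ref{prop1} applied with $p=1$ gives $\dim_{\mathbb C}H^{1,0}(X)\le \inf_{x\in X}\dim_{\mathbb C}F_x^{1,0}=0$, so $X$ carries no nonzero holomorphic $1$-forms. Via the Hodge-Kodaira decomposition recalled in the excerpt, together with $\mathcal H^{1,0}(X)=\overline{\mathcal H^{0,1}(X)}$, this forces the first Betti number $b_1(X)=\dim_{\mathbb C}H^{1,0}(X)+\dim_{\mathbb C}H^{0,1}(X)=0$.

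The next, and central, step is to transfer this cohomological vanishing to a statement about $\pi_1(X)$. In the manifold case, the standard route is: pass to a finite-index subgroup argument via the structure theory for groups acting on nonnegatively curved spaces, or more directly use the Cheeger–Gromoll splitting theorem. Since the excerpt explicitly invokes \emph{the splitting theorem for orbifolds in} \cite{BoZ}, I would argue as follows. Consider the universal orbifold covering $\widetilde X$ of $X$; because $\Ric(g)\ge 0$ lifts to $\widetilde X$, the orbifold splitting theorem decomposes $\widetilde X$ isometrically as a product $\mathbb R^k\times Y$ (or $\mathbb R^{2k}\times Y$ in the K\"ahler setting, respecting the complex structure) where $Y$ is a compact orbifold. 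A nonzero Euclidean de Rham factor $\mathbb R^k$ with $k\ge 1$ would produce parallel $1$-forms invariant under the deck group, hence descend to give a nonzero element of $b_1$ on a finite orbifold cover of $X$; but one can upgrade $b_1(X)=0$ to vanishing on all finite covers (every finite orbifold cover again satisfies the hypotheses of Proposition~\ref{prop1}, since $\Ric\ge0$ and the fixed-point condition are local and pull back). This rules out a Euclidean factor, so $\widetilde X$ is compact, which forces $\pi_1(X)$ to be finite.

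The subtlety I would flag as the main obstacle is the orbifold version of the covering-space dictionary: one must be careful that the relevant notion here is the \emph{topological} fundamental group $\pi_1(X)$ (as in the theorem statement), not the orbifold fundamental group $\pi_1^{\mathrm{orb}}(X)$, and the example of $T^2_{\mathbb C}/\mathbb Z_2^2$ in the introduction shows these genuinely differ. So the clean argument is to work with finite \emph{unbranched} topological covers: a finite-index subgroup of $\pi_1(X)$ corresponds to such a cover $X'\to X$, which is again a compact K\"ahler orbifold with $\Ric\ge0$ inheriting the fixed-point hypothesis, whence $b_1(X')=0$. If $\pi_1(X)$ were infinite, Gromov's theorem on almost-flat manifolds / the structure of fundamental groups of nonnegatively curved spaces would supply a finite-index subgroup surjecting onto $\mathbb Z$, yielding $b_1(X')\ge 1$ for some such cover, a contradiction. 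I would therefore present the proof in the order: (i) deduce $b_1=0$ on $X$ and all its finite covers from Proposition~\ref{prop1}; (ii) invoke the orbifold splitting theorem of \cite{BoZ} to conclude $\widetilde X$ is compact; (iii) conclude $\pi_1(X)$ is finite. The delicate point requiring care is precisely step (ii), ensuring the splitting theorem is applied to the correct covering and that the absence of a line/Euclidean factor is correctly read off from the vanishing of $b_1$ on covers.
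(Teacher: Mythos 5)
Your central step (ii) fails as stated: you apply the splitting theorem of \cite{BoZ} to the universal \emph{orbifold} covering $\widetilde X$ and conclude that $\widetilde X$ is compact, but under the hypotheses of the lemma the universal orbifold covering need not be compact. The paper's own example $X=T^2_{\mathbb{C}}/\mathbb{Z}_2^2$ has ${\rm Ric}(g)\equiv 0$ and $\dim_{\mathbb{C}}F_{x_0}^{1,0}=0$, yet its universal orbifold covering is $\mathbb{C}^2$ (a pure Euclidean factor, $l=4$) while $\pi_1(X)$ is trivial. The same example refutes your auxiliary claim that ``every finite orbifold cover again satisfies the hypotheses of Proposition \ref{prop1}'': orbifold covers are branched, the local groups upstairs are proper subgroups of those downstairs, and indeed $T^2_{\mathbb{C}}$ itself is a finite orbifold cover of $X$ with $b_1=4$. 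You correctly sense this and pivot to unbranched topological covers (along which the fixed-point hypothesis does pull back), but then the splitting theorem must be applied to the \emph{topological} universal cover $\bar X$ instead, and this requires first equipping $\bar X$ with an orbifold structure --- a genuine step (shrinking charts so that each component $\bar U_x^{\nu}$ of $\pi^{-1}(U_x)$ is homeomorphic to $U_x=\tilde U_x/G_x$) that the paper carries out and your proposal omits.

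Even after that repair, your mechanism for excluding the Euclidean factor is both misattributed and unproven in the orbifold setting: Gromov's almost-flat theorem concerns pinched sectional curvature and is irrelevant here, and the fact you actually need --- that the deck group acting on $Y\times\mathbb{R}^l$ with $Y$ compact has a finite-index subgroup surjecting onto $\mathbb{Z}$ when $l\geq 1$ --- is Cheeger--Gromoll/Bieberbach structure theory, which you would have to re-establish for orbifold isometries. The paper bypasses all of this and, notably, never uses Proposition \ref{prop1} or $b_1=0$ in the proof of Lemma \ref{le}. Instead it argues pointwise: if $l>0$, every point of $\bar X$ lies in a chart $\tilde U_x/G_x$ compatible with the splitting $\bar X=Y\times\mathbb{R}^l$, so the fixed-point set of $G_x$ acting on $\mathbb{C}^n$ contains a real subspace $\mathbb{R}^l$; since $G_x\subseteq U(n)$ and $T^*_0\tilde U_x\otimes\mathbb{C}=T^{*(1,0)}_0\tilde U_x\oplus T^{*(0,1)}_0\tilde U_x$, a nonzero $G_x$-fixed real covector yields a nonzero $G_x$-fixed element of $T^{*(1,0)}_0\tilde U_x$, whence $\dim_{\mathbb{C}}F_x^{1,0}\geq 1$ for \emph{every} $x\in X$, directly contradicting $\inf_{x\in X}\dim_{\mathbb{C}}F_x^{1,0}=0$. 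Hence $l=0$ and $\bar X=Y$ is compact (compactness of $Y$ coming from the arguments of Theorem 2 of \cite{BoZ} run with the topological fundamental group). Your $b_1$-based detour could probably be made rigorous after these corrections, but it is substantially heavier than the paper's one-step local argument.
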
 

\begin{proof}
Let $\pi: \bar{X}\rightarrow X$ be the universal covering   of $X$ in the usual topological sense.  For an orbifold chart $U_x\subseteq X$,
 $\pi^{-1}(U_x)=\coprod\limits_{\nu\in \pi_1(X)}\bar{U}_x^\nu$, and 
 any connected  component $\bar{U}_x^\nu$  of $\pi^{-1}(U_x)$ is homeomorphic to $U_x$ by shrinking $U_x$ if necessary. Hence $\bar{U}_x^\nu=\tilde{U}_x/G_x $, and   $\bar{X}$  is equipped with  a natural orbifold structure induced by that of $X$, more precisely the collection of orbifold charts $\bar{\mathcal{U}}=\{\bar{U}_x^\nu| U_x\in \mathcal{U}, \pi^{-1}(U_x)=\coprod\limits_{\nu\in \pi_1(X)}\bar{U}_x^\nu\} $. 
 
 Theorem 1 in  \cite{BoZ} shows that $$\bar{X}=Y\times \mathbb{R}^l$$ where $Y$ is an orbifold containing no geodesic lines.  The same arguments as in the proof of Theorem 2 of \cite{BoZ} prove that $Y$ is compact by replacing the orbifold fundamental group in \cite{BoZ} with the usual topological  fundamental group. 

 For any $x\in X$, there is an orbifold chart $\bar{U}_x^\nu \subseteq \bar{X}$ homeomorphic to $U_x$, and  $\bar{U}_x^\nu=\tilde{U}_x/G_x $. Therefore, the fixed point set of the $G_x$-action on $\mathbb{C}^n$  contains at least a real linear subspace $\mathbb{R}^l$.    Since $G_x\subseteq U(n)$ and $T^*_0 \tilde{U}_x \otimes \mathbb{C}=T^{*(1,0)}_0 \tilde{U}_x \oplus T^{*(0,1)}_0 \tilde{U}_x  $, there is a non-trivial $w\in T^{*(1,0)}_0 \tilde{U}_x $ fixed by the $G_x$-action if $l>0$, i.e.,  Re$(w)\in T^*_0(\{0\} \times \mathbb{R}^l)\subseteq T^*_0 \tilde{U}_x $ and $t\cdot w=w$ for all $t\in G_x$.  This contradicts the hypotheses  $\inf\limits_{x\in X}\dim_{ \mathbb{C}}F^{1,0}_x=0$. Hence $l=0$, and the universal covering space $\bar{X}=Y$ is compact, which is equivalent to $\pi_1(X)$ being finite.   
\end{proof}

 The Hirzebruch-Riemann-Roch formula has been generalised to  complex orbifolds by Kawasaki in \cite{Ka2}. More precisely, $$\chi(X, \mathcal{O}_X)= \sum_{U_x \in \mathcal{U} } \frac{1}{|G_x|} \sum_{t \in G_x }\int_{\tilde{U}_x^t} \eta_{U_x}\circ q_x  {\rm Todd}^t(\tilde{g}),$$ 
 where  
 $ \mathcal{U}=\{U_x\}$  covers  $X$, and $\{\eta_{U_x}\}$ is a partition of unity  with respect to $ \mathcal{U}$, i.e.,  $\sum\limits_{U_x \in \mathcal{U} } \eta_{U_x} \equiv 1$.  Here $\tilde{g}=q_x^*g$,  $\tilde{U}_x^t=\{y \in \tilde{U}_x|t\cdot y=y\}$, for any $t\in G_x$, and   ${\rm Todd}^t(\tilde{g})$ is the equivariant Todd form on  $\tilde{U}_x^t$. Suppose that   the universal covering  $\pi: \bar{X}\rightarrow X$   of $X$ is compact, i.e., the fundamental group $\pi_1(X)$ is finite.
   Now the collection $\bar{\mathcal{U}}=\{\pi^{-1}(U_x)=\coprod\limits_{\nu\in \pi_1(X)}\bar{U}_x^\nu\}$ covers $\bar{X}$, and $\{\eta_{U_x}\circ \pi\}$ is a partition of unity on $\bar{X}$. We obtain  $$ \chi(\bar{X}, \mathcal{O}_{\bar{X}})= \sum_{\bar{U}_x^\nu \in \bar{\mathcal{U}} } \frac{1}{|G_x|} \sum_{t \in G_x }\int_{\tilde{U}_x^t} \eta_{U_x}\circ q_x  {\rm Todd}^t( \tilde{g})=|\pi_1(X)|\chi(X, \mathcal{O}_X).$$

\begin{proof}[Proof of Theorem \ref{main-th}]  Assume that the Ricci curvature ${\rm Ric}(g)\geq 0$, and   the condition  (\ref{eq2}) holds.  Hence we have $H^{p, 0}(X)=\{0\}$ for any $1\leq p\leq n$ by Proposition \ref{prop1}, and $$\chi(X, \mathcal{O}_X)=1.  $$ Lemma \ref{le} shows that $\pi_1(X)$ is finite.  
The Ricci curvature of $\pi^*g$ is  non-negative, and the condition of (\ref{eq2}) is also satisfied since $\bar{U}_x^\nu=\tilde{U}_x/G_x $ for any $\nu\in \pi_1(X) $, i.e., for each  $p=1, \cdots,  n$, 
 there is a point $y\in \bar{X}$ such that $\pi(y)=x$ and $$\dim_{\mathbb{C}} F_{y}^{p,0}=\dim_{\mathbb{C}} F_{x}^{p,0}=0.$$ Therefore $$ 1=\chi(\bar{X}, \mathcal{O}_{\bar{X}})=|\pi_1(X)|\chi(X, \mathcal{O}_X)= |\pi_1(X)|.$$ We conclude that $X$ is simply connected.   
 \end{proof}

 \begin{proof}[Proof of Theorem \ref{th-new}] The same arguments as above prove that the universal covering $\bar{X}$ is compact, i.e., $\pi_1(X)$ is finite.  
Since all holomorphic forms are parallel as shown in the proof of Proposition \ref{prop1}, we have $$\dim_{\mathbb{C}}H^{2m,0}(X)\leq 1, \  \  \  {\rm and }  \  \  \ \dim_{\mathbb{C}}H^{2m,0}(\bar{X})\leq 1.$$ By Proposition \ref{prop1},  the condition (\ref{eq2+}) implies that  $H^{p,0}(X)=H^{p,0}(\bar{X})=0$ for $1\leq p \leq 2m-1$ as above. Thus $\chi(\bar{X}, \mathcal{O}_{\bar{X}})$ and $\chi(X, \mathcal{O}_X)$ are either $1$ or $2$.  Note that  $\chi(\bar{X}, \mathcal{O}_{\bar{X}})\geq \chi(X, \mathcal{O}_X)$.   If $\chi(\bar{X}, \mathcal{O}_{\bar{X}})=\chi(X, \mathcal{O}_X)$, then $X$ is simply connected. If  $\chi(\bar{X}, \mathcal{O}_{\bar{X}})=2$ and $\chi(X, \mathcal{O}_X)=1$, then  $\pi_1(X)=\mathbb{Z}_2$.  In this case, $H^{2m,0}(\bar{X})$ is generated by a non-trivial  parallel holomorphic $2m$-form $\Omega$ on $ \bar{X}$. Hence, the local holonomy group of $\pi^*g$ is reduced to $SU(2m)$, which implies ${\rm Ric}(\pi^* g)=\pi^* {\rm Ric}(g)\equiv 0$ (cf. Chapter 10 in  \cite{Be}).  Finally, if there is a    non-vanishing  holomorphic $2m$-form $\Omega$ on $X$, then $\pi^*\Omega$ is also holomorphic on  $\bar{X}$ and  ${\rm Ric}( g)\equiv 0$. Therefore,   $ \chi(\bar{X}, \mathcal{O}_{\bar{X}})=\chi(X, \mathcal{O}_X)= 2, $ which implies that $\bar{X}=X $ and  $X$ is simply connected. 
 \end{proof}

\appendix
\section{ A corollary obtained by an AI}

The following result was discovered and proved by an artificial intelligence system (XIPU AI) using the preceding material. Only the theorem number has been modified by the author.

\begin{theorem}
Let $(X, g)$ be a compact K\"ahler $n$-dimensional orbifold with non-negative Ricci curvature, $\mathrm{Ric}(g) \ge 0$. If there exists an integer $1 \le k \le n$ such that
\[
\inf_{x \in X} \dim_\mathbb{C} F_{p,0}^{x} = 0 \quad \text{for all } 1 \le p \le k,
\]
then the fundamental group $\pi_1(X)$ is finite. Moreover, if $k = n$, then $X$ is simply connected.
\end{theorem}

\begin{proof}
By the Bochner technique (cf. Proposition 3), for each $1 \le p \le k$, all holomorphic $p$-forms vanish:
\[
H^{p,0}(X) = \{0\}.
\]

From Lemma 4, the condition $\inf_{x \in X} \dim_\mathbb{C} F_{1,0}^{x} = 0$ ensures that $\pi_1(X)$ is finite.

If $k = n$, then $H^{p,0}(X) = 0$ for all $1 \le p \le n$, which implies
\[
\chi(X, \mathcal{O}_X) = 1.
\]
For the universal cover $\bar X \to X$, we have
\[
\chi(\bar X, \mathcal{O}_{\bar X}) = |\pi_1(X)| \, \chi(X, \mathcal{O}_X),
\]
hence $|\pi_1(X)| = 1$, showing that $X$ is simply connected.
\end{proof}


\begin{thebibliography}{99}
   \bibitem{Ba} W. Baily Jr.,    {\em The decomposition theorem for V-manifolds}, Amer. J. Math., 78 (1956),
862--888.

 \bibitem{Ba2} W. Baily Jr.,    {\em On the imbedding of V-manifolds in projective space}, Amer. J. Math., 79 (1957),
403--430. 

\bibitem{Be} A. Besse, {\em Einstein manifolds},  Springer, (1987). 

 \bibitem{Bo} S. Bochner,  {\em Vector fields and Ricci curvature}, Bull. Amer. Math. Soc.,  52 (1946), 776--797.

 \bibitem{Bor}  J. Borzellino,  {\em Orbifolds with lower Ricci curvature bounds}, Proc. Amer. Math. Soc., 
125 (1997), 3011--3018.   
 \bibitem{BoZ} J. Borzellino,  S. Zhu,  {\em The splitting theorem for orbifolds}, Illinois J. Math., 38 (1994),
679--691.
\bibitem{CaT} F. Caramello, D. T\"oben, {\em Equivariant basic cohomology under deformations}, Math.
Z., 299 (2021), 2461--2482.
 
  \bibitem{Ka1} T. Kawasaki,  {\em The signature theorem for V-manifolds}, Topology, 17 (1978), 75--83.
 \bibitem{Ka2} T. Kawasaki,  {\em The Riemann-Roch theorem for complex V-manifolds}, Osaka J. Math., 16 (1979), 151--159.
 \bibitem{KoT} R. Kobayashi,  A. Todorov, {\em Polarized period map for genaralized K3 surfaces and the moduli of Einstein metrics}, Tôhoku Math. J.,  39 (1987), 341--363. 
  \bibitem{Ko} S. Kobayashi, {\em On compact K\"ahler manifolds with positive definite Ricci tensor},   Ann. of Math.,  74 (1961),  570--574.  
 \bibitem{Mo}  A. Moroianu,  {\em Lectures on K\"ahler geometry},  London Mathematical Society Student Texts 69, Cambridge University Press,  (2009).  
  \bibitem{Sa} I. Satake, {\em The Gauss-Bonnet theorem for V-manifolds}, J. Math. Soc. Japan,  9
(1957), 464--492.
 \bibitem{Sp} E. Spanier, {\em The homology of Kummer manifolds}, Proc. Amer. Math. Soc., 
 7 (1956),  155--160.
 \bibitem{Th} W. Thurston, {\em  The geometry and topology of 3-manifolds}, Lecture Notes, Princeton University, (1978).
\bibitem{Z}  Y. Zhang, {\em Convergence of K\"{a}hler manifolds and calibrated fibrations}, PhD thesis,
Nankai Institute of Mathematics  (2006).  
\end{thebibliography}
\end{document}